\newtheorem{thm}{Theorem}
\newtheorem{lem}{Lemma}
\newtheorem{rem}{Remark}
\theoremstyle{definition}
\newcommand{\A}{{\mathcal A}}
\newcommand{\es}{{\mathcal S}}
\newcommand{\K}{{\mathcal K}}
\newcommand{\D}{{\mathbb D}}
\begin{document}
\bibliographystyle{amsplain}

\title[Simple proofs of certain inequalities with logarithmic coefficients]{Simple proofs of certain inequalities with logarithmic coefficients of univalent functions}

\author[M. Obradovi\'{c}]{Milutin Obradovi\'{c}}
\address{Department of Mathematics,
Faculty of Civil Engineering, University of Belgrade,
Bulevar Kralja Aleksandra 73, 11000, Belgrade, Serbia.}
\email{obrad@grf.bg.ac.rs}

\author[N. Tuneski]{Nikola Tuneski}
\address{Department of Mathematics and Informatics, Faculty of Mechanical Engineering, Ss. Cyril and
Methodius
University in Skopje, Karpo\v{s} II b.b., 1000 Skopje, Republic of North Macedonia.}
\email{nikola.tuneski@mf.edu.mk}

\subjclass[2020]{30C45, 30C50}
\keywords{univalent functions, convex functions, logarithmic coefficient, moduli, estimate}

\begin{abstract}
In this paper, we give simple proofs for the bounds (some of them sharp) of the difference of the moduli of the second and the first logarithmic coefficient for the general class of univalent functions and for the class of convex univalent functions.
\end{abstract}

\maketitle

\section{Introduction and definitions}

Let $\A$ be the class of functions $f$ analytic in the open unit disk $\D=\{z:|z|<1\}$ and normalized such that $f(z)=z+a_2z^2+a_3z^3+\cdots$. If additionally, $f$ is one-on-one and onto, we say that it is univalent and denote with $\es$ the corresponding subclass of $\A$ containing all such functions.

One of the most important results of the twentieth century, the Bieberbach conjecture is related with this class, and says that for all univalent functions, $|a_n|\le n$ for all positive integers $n$. It was formulated in 1916 (\cite{bieber}) and proven in 1985 by de Branges \cite{branges}. Huge part of the theory of univalent functions is oriented towards finding estimates (preferably sharp) of expressions involving moduli of coefficients $a_n$. More details can be found in \cite{book}. In parallel, significant attention is given to expressions involving so-called logarithmic coefficients, $\gamma_n$, $n=1,2,\ldots$, defined  by
\begin{equation}\label{eq2}
\log\frac{f(z)}{z}=2\sum_{n=1}^\infty \gamma_n z^n.
\end{equation}
The expressions are studied over the general class $\es$ of univalent functions, or over its subclasses (starlike, convex, close-to-convex, et c.).

In this paper we give estimates of the difference of the moduli of the second and the first logarithmic coefficient,
\[|\gamma_2|-|\gamma_1|,\]
for the general class of univalent functions and for the class of convex functions. In the first case the estimate is sharp, while in the second is partly sharp. The class of convex functions, $\K$,  consists of functions $f$ from $\A$ that map the open unit disk $\D$ onto a convex domain. These functions are univalent.

The result for the general class of univalent functions is sharp and was previously obtained in \cite[Theorem 3.1]{lecko} and is rather complicated. Here we give a simple proof. The result for the class of convex functions is partly sharp.

Before continuing, let note that by equating the coefficients on both sides of \eqref{eq2} we receive
\begin{equation}\label{eqc}
\gamma_1=\frac{a_2}{2} \quad \text{and} \quad \gamma_2 = \frac12\left( a_3-\frac{a_2^2}{2} \right).
\end{equation}

\section{The result for the general class $\es$}

In \cite{lecko} the authors used robust technique to obtain sharp estimate of $|\gamma_2|-|\gamma_1|$ for the general class of univalent functions. Here we will prove the same using elementary technique and the well known sharp inequality holding for all $f$ from $\es$:
\begin{equation}\label{e3}
|a_3-a_2^2|\le1.
\end{equation}
For its proof see \cite[p.5]{book}.

\begin{thm}
For every function $f\in\es$, $-\frac{\sqrt2}{2} \le |\gamma_2|-|\gamma_1| \le \frac12$ holds sharply.
\end{thm}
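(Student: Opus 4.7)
The approach will be to reduce everything to the two identities $\gamma_1=a_2/2$ and $\gamma_2=(a_3-a_2^2/2)/2$ from \eqref{eqc}, combined with the known inequality \eqref{e3}, via the triangle inequality applied in two directions. Writing $t:=|a_2|\in[0,2]$, both bounds then become elementary quadratic estimates in $t$, which is the whole point of aiming for a \emph{simple} proof.

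For the upper bound, I would decompose $a_3-a_2^2/2=(a_3-a_2^2)+a_2^2/2$ and apply the ordinary triangle inequality together with \eqref{e3} to obtain $|a_3-a_2^2/2|\le 1+t^2/2$. This yields $|\gamma_2|-|\gamma_1|\le \tfrac12+\tfrac{t^2}{4}-\tfrac{t}{2}$, a quadratic whose maximum on $[0,2]$ is $\tfrac12$, attained at both endpoints. Sharpness comes from the odd Koebe function $f(z)=z/(1-z^2)\in\es$, which has $a_2=0$ and $a_3=1$, hence $|\gamma_2|-|\gamma_1|=1/2$.

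For the lower bound, the reverse triangle inequality applied to the same decomposition gives $|a_3-a_2^2/2|\ge t^2/2-1$, which is informative only when $t\ge\sqrt2$. I would therefore split on $t$: for $t\le\sqrt2$, use the trivial bound $|\gamma_2|\ge 0$ to get $|\gamma_2|-|\gamma_1|\ge -t/2\ge -\sqrt2/2$; for $t\in[\sqrt2,2]$, the nontrivial bound gives $|\gamma_2|-|\gamma_1|\ge \tfrac{t^2}{4}-\tfrac{t}{2}-\tfrac12$, and since the critical point $t=1$ of this quadratic lies to the left of $\sqrt2$, the function is increasing on $[\sqrt2,2]$ and bounded below by its value $-\sqrt2/2$ at $t=\sqrt2$. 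The two case estimates match exactly at $t=\sqrt2$, which singles out the equality conditions $|a_2|=\sqrt2$ together with $a_3=a_2^2/2$ (forcing $\gamma_2=0$).

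The one genuinely delicate point is \emph{sharpness} of the lower bound: exhibiting an $f\in\es$ with $|a_2|=\sqrt2$ and $a_3=a_2^2/2$ is not immediate, since the naive polynomial candidate $z+\sqrt2\,z^2+z^3$ fails to be univalent (its derivative has a double zero inside $\D$). For this last step I would invoke the extremal function already constructed in \cite{lecko} rather than reconstruct it, which is consistent with the paper's stated aim of simplifying the derivation of the inequality itself.
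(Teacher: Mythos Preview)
Your argument is correct and uses essentially the same ingredients as the paper: the decomposition $a_3-\tfrac{a_2^2}{2}=(a_3-a_2^2)+\tfrac{a_2^2}{2}$, the triangle inequality in both directions, the estimate \eqref{e3}, and the case split at $|a_2|=\sqrt2$ for the lower bound (the paper's upper-bound chain just assembles the same pieces in a slightly different order, arriving directly at $\tfrac12|a_3-a_2^2|$ rather than optimizing your quadratic). The one point you defer---sharpness of the lower bound---the paper handles explicitly with the function
\[
f(z)=\frac{z}{1-\sqrt2\,e^{i\theta}z+e^{2i\theta}z^2}=z+\sqrt2\,e^{i\theta}z^2+e^{2i\theta}z^3+\cdots,
\]
which for $\theta=0$ is the standard slit mapping $z/(1-2z\cos(\pi/4)+z^2)\in\es$ and has $a_2=\sqrt2\,e^{i\theta}$, $a_3=e^{2i\theta}=a_2^2/2$, so no appeal to \cite{lecko} is needed (incidentally, the derivative of your polynomial candidate has two simple zeros in $\D$, not a double zero).
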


\begin{proof}
From the Bieberbach conjecture, for all $f\in\es$, we have $\frac12|a_2|\le1$, and further, applying \eqref{e3} in the final step, we have
\[
\begin{split}
|\gamma_2|-|\gamma_1| &= \frac12\left| a_3-\frac{a_2^2}{2} \right| -\frac12|a_2|  \\
&\le \frac12\left| a_3-\frac{a_2^2}{2} \right| -\frac12|a_2| \left(\frac12|a_2| \right)\\
&\le \frac12\left( \left|a_3-\frac{a_2^2}{2}\right| -\frac12|a_2|^2\right) \\
&\le \frac12\left| \left(a_3-\frac{a_2^2}{2}\right) -\frac12 a_2^2\right| \\
&= \frac12\left| a_3- a_2^2 \right| \le \frac12.
\end{split}
\]
The estimate is sharp since equality is attained for the function
\[f_1(z) = \frac{z}{1+e^{i\theta} z^2} = z-e^{i\theta} z^3 +e^{2i\theta}z^5+\cdots.\]

The second inequality that is to be proven, $ |\gamma_2|-|\gamma_1| \ge -\frac{\sqrt2}{2} $,  is equivalent to $\frac12\left| a_3-\frac{a_2^2}{2}\right| - \frac12 |a_2| \ge -\frac{\sqrt2}{2}$, i.e., to
\begin{equation}\label{e2}
\left| a_3-\frac{a_2^2}{2}\right| \ge |a_2| - \sqrt2.
\end{equation}
If $|a_2| < \sqrt2$, then \eqref{e2} is obviously true. For the remaining case, when $\sqrt2 \le |a_2|\le2$, we have
\[
\begin{split}
\left| a_3-\frac{a_2^2}{2}\right| &= \left| \left(a_3-a_2^2\right) + \frac{a_2^2}{2}\right| \ge \frac{|a_2|^2}{2} - \left|a_3-a_2^2\right| \ge \frac{|a_2|^2}{2} - 1 \ge |a_2|-\sqrt2.
\end{split}
\]
The last inequality holds since it is equivalent to $(|a_2|-\sqrt2)(|a_2|+\sqrt2-2)\ge0$, and also $|a_2|-\sqrt2 \ge0$ and $|a_2|+\sqrt2-2 >0$. The estimate is sharp with the equality attained for the function
\[f(z)=\frac{z}{1-\sqrt2e^{i\theta}z+e^{2i\theta}z^2} = z + \sqrt2e^{i\theta}z^2 + e^{2i\theta}z^3+\cdots\]
 for which $|a_2|=\sqrt2$ and $|a_3-a_2^2|=1$.
\end{proof}

\section{The result for the class $\K$}

Now we will prove a partly sharp estimate of $|\gamma_2|-|\gamma_1| $ over the class $\K$, and for that we will make use of the following result due to \cite{trimble}.

\begin{lem}\label{lem-2}
For all functions $f(z)=z+a_2z^2+a_3z^3+\cdots$ from $\K$,
\[\left|a_3-a_2^2\right|\le \frac13\left( 1-|a_2|^2 \right).\]
The inequality is sharp with extremal function
\[ f_{\lambda}(z) = \int_0^z\left( \frac{1-t}{1+t} \right)^{\lambda}\frac{1}{1-t^2}\,dt = z+\lambda z^2+\frac13\left( 2\lambda^2+1\right)z^3+\cdots, \]
with $0\le\lambda\le1$.
\end{lem}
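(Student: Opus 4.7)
The natural route is to exploit the standard analytic characterization of convex functions: $f\in\K$ if and only if the function
\[p(z) := 1+\frac{zf''(z)}{f'(z)} = 1+p_1z+p_2z^2+\cdots\]
has positive real part on $\D$. This brings the problem into the Carath\'eodory class $\mathcal P$, where we have at our disposal the bound $|p_n|\le 2$ and, more importantly, the sharp parametric identity $2p_2=p_1^2+(4-p_1^2)x$ for some $x\in\overline\D$, which yields the sharp inequality $|2p_2-p_1^2|\le 4-|p_1|^2$.

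The first step is to read off $a_2$ and $a_3$ in terms of $p_1,p_2$. Expanding $zf''=(p-1)f'$ and matching coefficients gives $a_2=p_1/2$ and $a_3=(p_1^2+p_2)/6$. A short algebraic manipulation then produces the key identity
\[a_3-a_2^2 \;=\; \frac{p_2}{6}-\frac{p_1^2}{12} \;=\; \frac{1}{12}\bigl(2p_2-p_1^2\bigr).\]
At this point the desired inequality drops out immediately from $|2p_2-p_1^2|\le 4-|p_1|^2$, because
\[|a_3-a_2^2|\le \frac{4-|p_1|^2}{12} = \frac{1}{3}\!\left(1-\tfrac{|p_1|^2}{4}\right)=\frac13\bigl(1-|a_2|^2\bigr).\]

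The only conceptual step is therefore the Carath\'eodory bound on $|2p_2-p_1^2|$; everything else is routine coefficient matching. If one prefers to avoid quoting the parametric formula, the same inequality can be obtained by the classical trick of writing $p(z)=(1+\omega(z))/(1-\omega(z))$ for a Schwarz function $\omega(z)=c_1z+c_2z^2+\cdots$, computing $p_1=2c_1$ and $p_2=2c_2+2c_1^2$, and invoking the Schwarz--Pick estimate $|c_2|\le 1-|c_1|^2$.

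Sharpness is verified by direct substitution of $f_\lambda$: from the stated expansion one has $a_2=\lambda$ and $a_3=(2\lambda^2+1)/3$, so $a_3-a_2^2=(1-\lambda^2)/3=\tfrac13(1-|a_2|^2)$, showing that equality holds for every $\lambda\in[0,1]$. I do not anticipate a genuine obstacle; the only place where care is needed is to track the complex (rather than real) nature of $p_1$ when invoking the bound $|2p_2-p_1^2|\le 4-|p_1|^2$, which is why the parametric (or Schwarz-function) derivation is preferable to any argument that silently assumes $p_1\in\mathbb R$.
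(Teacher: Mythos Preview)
The paper does not supply its own proof of this lemma; it is quoted verbatim from Trimble \cite{trimble} and used as a black box in the proof of Theorem~2. Your argument is correct: the coefficient relations $a_2=p_1/2$ and $a_3=(p_1^2+p_2)/6$ coming from $zf''=(p-1)f'$ give $a_3-a_2^2=(2p_2-p_1^2)/12$, and the Schwarz-function computation $2p_2-p_1^2=4c_2$ together with $|c_2|\le 1-|c_1|^2$ yields $|2p_2-p_1^2|\le 4-|p_1|^2$ without any reality assumption on $p_1$. The sharpness verification via $f_\lambda$ is also correct. So there is nothing to compare: you have supplied a valid proof where the paper provides none.
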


Note that the range of $\lambda$ for the extremal function exploits the fact that all coefficients in the expansion of  convex univalent functions have modulus less or equal to 1.

\begin{thm}
For every function $f\in\K$, $-\frac{1}{\sqrt{10}} \le |\gamma_2|-|\gamma_1| \le \frac16$. The second inequality is sharp.
\end{thm}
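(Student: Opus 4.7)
The plan is to use \eqref{eqc} to rewrite
\[|\gamma_2|-|\gamma_1|=\tfrac12\bigl|a_3-\tfrac12 a_2^2\bigr|-\tfrac12|a_2|,\]
bound the first term above and below in terms of $|a_2|$ via Lemma~\ref{lem-2}, and then optimize the resulting one-variable expression over $|a_2|\in[0,1]$ (the valid range since convex univalent functions have all coefficients bounded by $1$ in modulus).

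For the upper estimate, the triangle inequality together with Lemma~\ref{lem-2} yields
\[\bigl|a_3-\tfrac12 a_2^2\bigr|\le|a_3-a_2^2|+\tfrac12|a_2|^2\le\tfrac13(1-|a_2|^2)+\tfrac12|a_2|^2=\tfrac13+\tfrac16|a_2|^2,\]
so $|\gamma_2|-|\gamma_1|\le\tfrac16+\tfrac{|a_2|^2}{12}-\tfrac{|a_2|}{2}$. As a function of $x=|a_2|\in[0,1]$ this is strictly decreasing (its only critical point lies at $x=3$), so its maximum $\tfrac16$ is attained at $a_2=0$. Sharpness is realized by the $\lambda=0$ case of Lemma~\ref{lem-2}, namely $f_0(z)=\tfrac12\log\tfrac{1+z}{1-z}=z+\tfrac{z^3}{3}+\cdots$, which gives $|\gamma_2|-|\gamma_1|=\tfrac16$.

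For the lower estimate I would rephrase the inequality as $\bigl|a_3-\tfrac12 a_2^2\bigr|\ge|a_2|-\tfrac{2}{\sqrt{10}}$ and split at $|a_2|=\sqrt{2/5}=2/\sqrt{10}$, which is precisely the value where $|a_2|^2/2$ matches the Trimble bound $(1-|a_2|^2)/3$. When $|a_2|\le\sqrt{2/5}$ the claim is immediate from $\tfrac12|a_2|\le\tfrac{1}{\sqrt{10}}$. When $|a_2|>\sqrt{2/5}$ the reverse triangle inequality together with Lemma~\ref{lem-2} gives
\[\bigl|a_3-\tfrac12 a_2^2\bigr|\ge\tfrac{|a_2|^2}{2}-|a_3-a_2^2|\ge\tfrac{|a_2|^2}{2}-\tfrac{1-|a_2|^2}{3}=\tfrac{5|a_2|^2-2}{6},\]
and the goal reduces to the quadratic inequality $5x^2-6x-2+\tfrac{12}{\sqrt{10}}\ge 0$ on $\bigl[\sqrt{2/5},1\bigr]$.

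The step I expect to require the most care is verifying this quadratic bound. The threshold is chosen precisely so that the quadratic vanishes at the left endpoint: direct substitution gives $-6\sqrt{2/5}+12/\sqrt{10}=0$ after rewriting both terms as $6\sqrt{10}/5$. Since the vertex of the quadratic sits at $x=3/5$, which lies to the left of $\sqrt{2/5}$ (as $9/25<10/25$), the quadratic is increasing on $\bigl[\sqrt{2/5},1\bigr]$ and hence stays nonnegative throughout. I would not attempt to construct an extremal convex function for the lower bound, which is consistent with the theorem's assertion that only the upper estimate is claimed sharp.
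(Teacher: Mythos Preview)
Your proof is correct and follows essentially the same route as the paper: triangle inequality plus Lemma~\ref{lem-2} for the upper bound, and the same split at $|a_2|=2/\sqrt{10}$ with the reverse triangle inequality and Lemma~\ref{lem-2} for the lower bound. Your treatment is in fact slightly more detailed than the paper's, which simply states that the final quadratic inequality is ``easily verified,'' whereas you justify it via the location of the vertex and the vanishing at the left endpoint.
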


\begin{proof}
From Lemma \ref{lem-2} we have
\[
\begin{split}
|\gamma_2|-|\gamma_1| &= \frac12\left| a_3-\frac{a_2^2}{2} \right| -\frac12|a_2|  \\
&= \frac12\left| \left(a_3-a_2^2\right) + \frac{a_2^2}{2}\right| -\frac12|a_2| \\
&\le \frac12\left| a_3-a_2^2\right| + \frac14|a_2|^2 -\frac12|a_2| \\
&\le \frac16\left( 1-|a_2|^2 \right) +\frac14|a_2|^2-\frac12|a_2|\\
&\le \frac{1}{12}\left( |a_2|^2 -6|a_2|+2\right)\\
&\le \frac{1}{6},
\end{split}
\]
since $|a_2|\le1$ for $f\in \K$. Equality holds for the function $f_0(z)=z+\frac13z^3+\cdots$, where $f_0$ is given in Lemma \ref{lem-2} for $\lambda=0$.

For the remaining case we need to prove
\[ |\gamma_2|-|\gamma_1| = \frac12\left| a_3-\frac{a_2^2}{2} \right| -\frac12|a_2| \ge -\frac{1}{\sqrt{10}}, \]
i.e.,
\begin{equation}\label{eq-l}
\left| a_3-\frac{a_2^2}{2} \right| \ge |a_2| -\frac{2}{\sqrt{10}}.
\end{equation}
If $|a_2| < \frac{2}{\sqrt{10}}$, the above obviously holds, while for $\frac{2}{\sqrt{10}} \le |a_2|\le1$, we have
\[
\begin{split}
\left| a_3-\frac{a_2^2}{2} \right| &= \left| \left(a_3-a_2^2\right) + \frac{a_2^2}{2}\right| \\
&\ge \frac{|a_2|^2}{2} - \left|a_3-a_2^2\right| \\
&\ge \frac{|a_2|^2}{2} - \frac13\left(1-|a_2|^2\right) \\
&= \frac56 |a_2|^2-\frac13 \\
&\ge |a_2|-\frac{2}{\sqrt{10}}.
\end{split}
\]
The last inequality holds since it is equivalent to $5|a_2|^2-6|a_2|+\frac{12}{\sqrt{10}}-2\ge0$ which is easily verified to be true.
\end{proof}

\begin{rem}
We were not able to prove sharpness of the left estimate in the previous theorem and it remains an open problem.
\end{rem}

\medskip

\end{document}